\numberwithin{equation}{section}
\newtheorem{thm}    {Theorem}
\setlist[enumerate,1]{label=(\roman*)}
\DeclareMathOperator{\C}{\mathbb{C}}
\newcommand{\ds}{\displaystyle}
\begin{document}

\title{On Graph Continued Fractions and the Heilmann-Lieb Theorem}

\author[T. J. ~Spier]{Thomás Jung Spier \\ \today}
\address{IMPA, Rio de Janeiro, RJ, Brasil}
\email{thomasjs@impa.br}


\begin{abstract}
Inspired by Viennot's observation that matching polynomials are numerators of branched continued fractions we present a proof of the Heilmann-Lieb Theorem.  
\end{abstract}


\clearpage\maketitle
\thispagestyle{empty}


\section{Introduction}

Let $G$ be the complete graph with vertex set $[n]$. Define variable weights $x_i$ and non-negative weights $\lambda_{jk}$ for each of the vertices and edges, respectively. Considering edges with weight set to zero as non-existent this definition captures all graphs.

A {\it matching} in $G$ is a set of edges, no two of which have a vertex in common, together with their respective endpoints. Denote by $\mathcal{M}_G$ the set of all matchings of $G$. Then the {\it matching polynomial} of $G$ is

\[\mu(G):=\ds\sum_{M\in\mathcal{M}_G}\ds\prod_{i\not\in M}x_i\ds\prod_{jk\in M}\lambda_{jk}.\]  

This is a real multivariate polynomial in the $n$ vertex variables $x_i$. It is also convenient to define $\mu(\emptyset)=1$.

The matching polynomial was first considered in statistical physics by Heilmann and Lieb \cite{heilmann-lieb}. In their article they proved:

\begin{thm}(Heilmann-Lieb \cite{heilmann-lieb})\label{Heilmann-Lieb} The matching polynomial of $G$ is different from zero if one of the following conditions is satisfied: 
	\begin{itemize}
		\item $Re(x_i)>0$ for every $i$;
		\item $|x_i|>2\sqrt{B_G}$ for every $i$, where $B_G$ is equal to $\ds\max_j\,\max_{\mathclap{\substack{A\subseteq [n]\setminus j\\ |A|=n-2}}}\quad\sum_{k\in A}\lambda_{jk}$ if $n\geq 3$, and equal to $\lambda_{12}/4$ or $0$ if $n$ is two or one, respectively.
	\end{itemize}	
\end{thm}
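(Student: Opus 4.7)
The plan is to prove both assertions by induction on $|V(G)|$, using the fundamental two-term recurrence for the matching polynomial. First, I would establish that for any vertex $v$,
\[ \mu(G) = x_v\,\mu(G - v) + \sum_{k \neq v} \lambda_{vk}\,\mu(G - v - k), \]
obtained by splitting matchings of $G$ according to whether $v$ is unmatched or paired with some other vertex $k$. Introducing the ratio $R_v(G) := \mu(G)/\mu(G - v)$, this becomes the branched continued fraction
\[ R_v(G) = x_v + \sum_{k \neq v} \frac{\lambda_{vk}}{R_k(G - v)}, \]
which is the Viennot expansion alluded to in the title. In both cases, a suitable inductive bound on $R_v(G)$ will deliver $\mu(G) \neq 0$ immediately, because $\mu(G - v) \neq 0$ follows from the same induction.

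For the first case I would show, by induction on $|V(G)|$, that if every $x_i$ has strictly positive real part then so does every $R_v(G)$. The inductive step is immediate from the recurrence: since $z \mapsto 1/z$ preserves the open right half-plane, the inductive hypothesis gives each $1/R_k(G - v)$ positive real part; multiplying by the nonnegative weights $\lambda_{vk}$ and summing keeps the result in the closed right half-plane; adding $x_v$, which lies in the open right half-plane, returns a value of strictly positive real part. In particular $R_v(G) \neq 0$, giving $\mu(G) \neq 0$.

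For the second case I would run an analogous induction but with the quantitative bound $|R_v(H)| > \sqrt{B_G}$, aiming to prove it for every vertex $v$ in every \emph{proper} induced subgraph $H \subsetneq G$. The point is that for such an $H$ the sum $\sum_{m \in H \setminus v} \lambda_{vm}$ involves at most $n-2$ weights and is therefore bounded by $B_G$, so combining with the inductive hypothesis $|R_m(H - v)| > \sqrt{B_G}$ and the reverse triangle inequality yields
\[ |R_v(H)| \;\geq\; |x_v| - B_G/\sqrt{B_G} \;>\; 2\sqrt{B_G} - \sqrt{B_G} \;=\; \sqrt{B_G}, \]
closing this intermediate induction.

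The main obstacle, which I expect to require the most care, is closing the argument at the top level $H = G$, where the recurrence sum $S_v := \sum_{k \neq v} \lambda_{vk}$ has $n - 1$ terms and need not be bounded by $B_G$. Here the precise form of $B_G$ becomes crucial: setting $\lambda_* := \min_{k \neq v} \lambda_{vk}$, one has $B_G \geq S_v - \lambda_*$ by definition, while the chain $(n-2)\lambda_* \leq S_v - \lambda_* \leq B_G$ forces $\lambda_* \leq B_G$ whenever $n \geq 3$, so that $S_v \leq B_G + \lambda_* \leq 2B_G$. This yields $|R_v(G)| \geq |x_v| - 2 B_G/\sqrt{B_G} > 0$, and therefore $\mu(G) = R_v(G)\,\mu(G - v) \neq 0$. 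The special cases $n \leq 2$ reduce to direct inspection against the ad hoc definitions of $B_G$ in the statement.
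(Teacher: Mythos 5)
Your argument is correct, and its skeleton is the same as the paper's (which is itself a restatement of Heilmann--Lieb): induction on the number of vertices, the ratio $R_v(G)=\mu(G)/\mu(G-v)$, and the branched continued-fraction recurrence $R_v(G)=x_v+\sum_{k\neq v}\lambda_{vk}/R_k(G-v)$. The paper merely packages the iterated recurrence as a tree continued fraction over the rooted tree of paths $T^i_G$; its non-root vertices correspond exactly to your ratios $R_v(H)$ for proper induced subgraphs $H$, and its root to your top-level step, while your half-plane case coincides with the paper's verbatim. The genuine difference is in the second case, and there your extra care matters. The paper closes that case by asserting that each interior map $f_{j,A}(x)=x_j+\sum_{k\in A}\lambda_{jk}/x_k$ with $|A|\le n-2$ sends $\{|x|>2\sqrt{B_G}\}^n$ back into $\{|x|>2\sqrt{B_G}\}$; read literally this is false when $B_G>0$ (take $(j,A)$ attaining $B_G$, $x_j$ real slightly above $2\sqrt{B_G}$, and $x_k=-x_j$ for $k\in A$: the output has modulus near $\tfrac{3}{2}\sqrt{B_G}$). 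What the composition actually needs is the asymmetric invariant you prove, $|R_v(H)|>\sqrt{B_G}$ for proper induced subgraphs, together with a separate estimate at the root, where the sum has $n-1$ terms; your bound $S_v\le B_G+\lambda_*\le 2B_G$ (valid for $n\ge 3$) is precisely what makes that root step, i.e.\ the nonvanishing of the $|A|=n-1$ function with denominators only known to exceed $\sqrt{B_G}$, go through. You also avoid the paper's appeal to $B_{G\setminus i}\le B_G$ by keeping $B_G$ of the full graph fixed throughout, which is a small simplification. Two trivial loose ends for a write-up: the degenerate case $B_G=0$ (for $n\ge 3$ all $\lambda_{jk}=0$, so $\mu(G)=\prod_i x_i$), where your divisions by $\sqrt{B_G}$ should be bypassed, and the cases $n\le 2$, which you rightly settle by direct inspection against the ad hoc definition of $B_G$.
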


A survey of the history of this polynomial is available in Gutman \cite{gutman2016survey}. 
	
In this short note our aim is to show how Theorem \ref{Heilmann-Lieb} is naturally connected to continued fractions. This follows from Viennot's \cite{viennot1985combinatorial} observation that matching polynomials are numerators and denominators of branched continued fractions. Even though we present a restatement of the original proof of Theorem \ref{Heilmann-Lieb} we believe that this different perspective can be useful. The connection between matching polynomials and continued fractions will be further explored in \cite{graphcf}.


\section{Proof of main result}

Notice that for every rooted tree one can associate a branched continued fraction in a natural way, as exemplified in Figure \ref{branchedcf}. We call this a {\it tree continued fraction}.

\begin{figure}[h]
	\includegraphics[width=\linewidth]{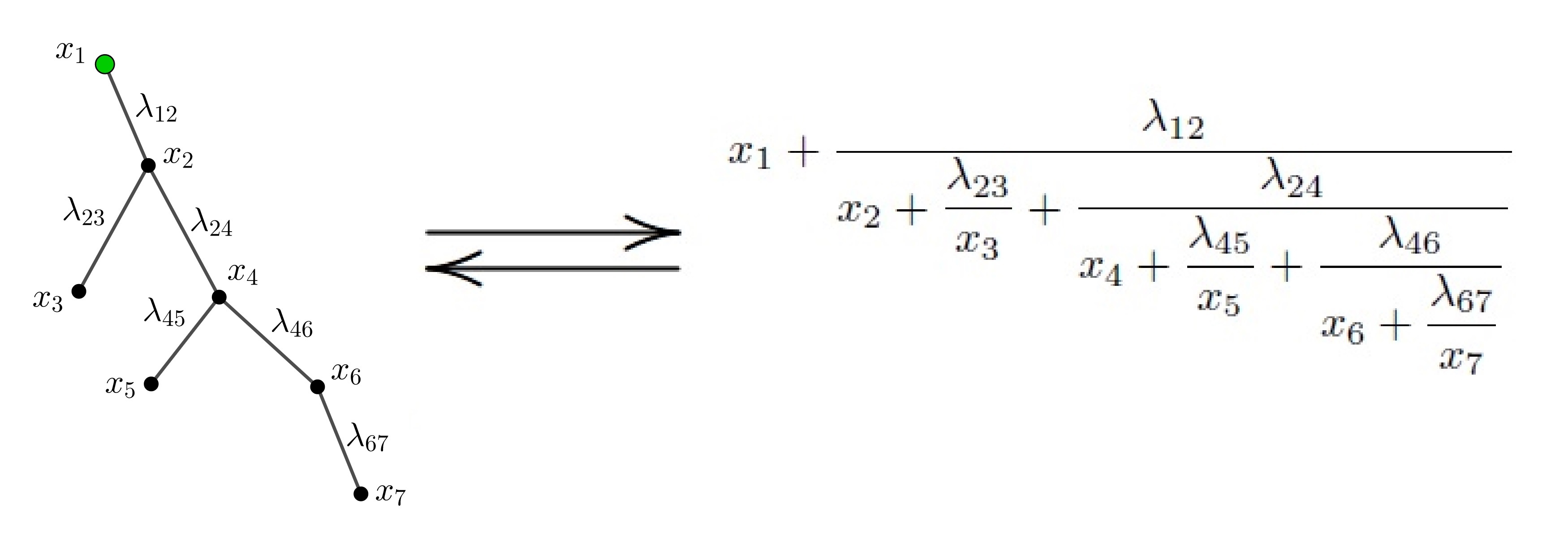}
	\caption{A rooted tree and its associated tree continued fraction.}
	\label{branchedcf}
\end{figure}

If we write $T$ for the tree and $i$ for its root, then the associated tree continued fraction is equal to $\dfrac{\mu(T)}{\mu(T\setminus i)}$. This fact can be proved using a recurrence for the matching polynomial. For every graph $G$ and vertex $i$, if we separate the matchings of $G$ into those that cover, or not, the vertex $i$ we obtain the recurrence,

\[\mu(G)=\ds\sum_{j\neq i}\lambda_{ij}\mu(G\setminus i,j)+x_i\mu(G\setminus i)\iff \dfrac{\mu(G)}{\mu(G\setminus i)}=x_i+\ds\sum_{j\neq i}\dfrac{\lambda_{ij}}{\dfrac{\mu(G\setminus i)}{\mu(G\setminus i,j)}}.\]

To finish the proof of the observed fact, we substitute the tree $T$ for the graph $G$ in this last equation and iterate the recurrence.

Looking at this procedure one can see that in principle it should work more generally for every rooted graph, the only missing ingredient being the analog of a tree continued fraction. Iterating the recurrence for a rooted graph, what one obtains at the end is a tree continued fraction for the {\it rooted tree of paths} of the rooted graph.

For a rooted graph $G$ with root $i$ its {\it rooted tree of paths} $T^i_G$ is the rooted tree with vertices labeled by paths in $G$ starting at $i$, where two vertices are connected if one path is a maximal sub-path of the other. The root of $T^i_G$ is the trivial path $i$, and the weights of $T^i_G$ are obtained from the weights of $G$, as exemplified in Figure \ref{godsil}.
 
This motivates the following definition. Given a rooted graph $G$ with root $i$ define its {\it graph continued fraction} as $\alpha_i(G):=\dfrac{\mu(G)}{\mu(G\setminus i)}$. Notice that this is consistent with the definition of tree continued fraction. The observation above leads to the equality $\alpha_i(G)=\alpha_i(T^i_G)$, originally due to Godsil \cite{godsil_matchings}. An illustration of this equality is presented in Figure \ref{godsil}, where, for simplicity, the rooted graphs represent their graph continued fractions. With the facts above, we are ready to prove the main theorem.

\begin{figure}[h]
	\includegraphics[width=\linewidth]{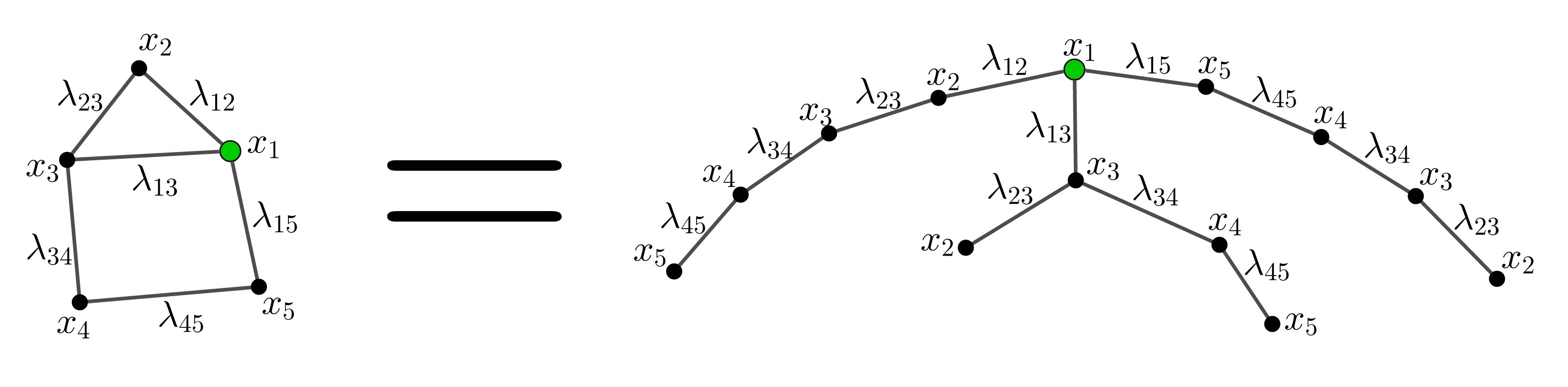}
	\caption{An illustration of the equality $\alpha_i(G)=\alpha_i(T^i_G)$.}
	\label{godsil}
\end{figure}

\begin{proof}[Proof of Theorem \ref{Heilmann-Lieb}.] The approach is the same as in \cite{heilmann-lieb}. Consider a graph $G$ and let $R$ be any of the two regions $[Re(x)>0]$ or $[|x|>2\sqrt{B_G}]$ in $\C$. Our aim is to prove that $\mu(G)$ is different from zero in $R^n$. Notice that for a graph with only one vertex this result is trivial. Assume, by induction hypothesis, that the statement is true for any graph with less vertices than $G$.
	
Choose as a root of $G$ any vertex $i$. By the induction hypothesis, and  $B_G\geq B_{G\setminus i}$, it is sufficient to prove that the graph continued fraction $\alpha_i(G)=\dfrac{\mu(G)}{\mu(G\setminus i)}$ is different from zero in $R^n$.

Recall that $\alpha_i(G)$ is equal to the tree continued fraction $\alpha_i(T^i_G)$. Following the structure of the rooted tree $T^i_G$, one can write $\alpha_i(G)=\alpha_i(T^i_G)$ as a composition of some functions

\[f_{j,A}(x_1,\dots,x_n):=x_j+\ds\sum_{k\in A}\frac{\lambda_{jk}}{x_k}, \]

with $j$ in $[n]$ and $A$ a subset of $[n]\setminus j$. Each function corresponding to a vertex in the rooted tree $T^i_G$. Notice that except for the last function in this composition, which corresponds to the root of $T^i_G$, all the other functions $f_{j,A}$ satisfy $|A|\leq n-2$. This can be seen by carefully examining the examples of Figures \ref{branchedcf} and \ref{godsil}.

Finally, observe that the image of $R^n$ by every function $f_{j,A}$ with $|A|\leq n-2$ is again contained in $R$, and that every function $f_{j,A}$ with $|A|=n-1$ is different from zero in $R^n$. Putting it all together it follows that $\alpha_i(G)=\alpha_i(T^i_G)$ is different from zero in $R^n$, which finishes the proof.

\end{proof}

\IfFileExists{references.bib}
  {\bibliography{references}}
  {\bibliography{../references}}


\end{document}